\newcommand{\R}{\ensuremath{R}}
\newcommand{\Rep}{\ensuremath{R^\epsilon}}
\newcommand{\g}[1]{\ensuremath{g\left( #1 \right)}}
\newcommand{\gep}[1]{\ensuremath{g_\epsilon\!\left( #1 \right)}}
\newcommand{\HH}{\ensuremath{\mathcal{H}}}
\newcommand{\VV}{\ensuremath{\mathcal{V}}}
\newcommand{\Gep}{\ensuremath{\Gamma^\epsilon}}
\newcommand{\nablep}{\ensuremath{\nabla^\epsilon}}
\newcommand{\II}{\ensuremath{\mathrm{II}}}
\newcommand{\Pf}{\ensuremath{\mathrm{Pf}}}
\newcommand{\mrm}[1]{\mathrm{#1}}
\theoremstyle{plain}
\newtheorem*{theorem*}{Theorem}
\newtheorem{theorem}{Theorem}
\newtheorem*{proposition*}{Proposition}
\newtheorem{corollary}[theorem]{Corollary}
\theoremstyle{definition}
\newtheorem*{definition*}{Definition}
\newtheorem*{example*}{Example}
\newtheorem*{remark*}{Remark}
\title{Shrinking the Fibers of a Submersion Splits the Riemann Tensor}
\author{Carl McTague}
\email{mctague@math.jhu.edu}
\urladdr{\href{http://www.mctague.org/carl}{www.mctague.org/carl}}
\address{Mathematics Department, Johns Hopkins University, Baltimore, MD 21218, USA}
\subjclass[2010]{Primary 53C15}
\keywords{submersion, O'Neill tensors, standard deformation}
\begin{document}
\begin{abstract}
  This paper uses Karcher's formulation \cite{karcher-1999} of the O'Neill tensors \cite{oneill-1966,gray-1967} to derive a concise formula for the family $\Omega^\epsilon$ of curvature forms obtained by shrinking the fibers of a submersion $\pi:M\to B$ of semi-Riemannian manifolds by a factor of $1-\epsilon$. The formula clearly shows that as $\epsilon$ approaches~1, $\Omega^\epsilon$ approaches the sum of the vertical curvature form $\Omega^\mathrm{V}$ and the pullback $\pi^*\Omega^B$ of the curvature form of $B$. The Gauss-Bonnet integrand $\Pf(\Omega^\epsilon)$ therefore approaches the wedge $\Pf(\Omega^\mathrm{V})\wedge\pi^*\Pf(\Omega^B)$. So if $\pi$ has compact fiber $F$, the pushforward $\pi_*\Pf(\Omega^\epsilon)$ approaches $\chi(F)\cdot\Pf(\Omega^B)$.
\end{abstract}

\maketitle

\section{Karcher's Formulation of the O'Neill Tensors}

A \emph{submersion $\pi:(M,g)\to(B,g_B)$ of semi-Riemannian manifolds} is a smooth map whose derivative $\mrm{D}\pi$ restricts to an isometry:
\begin{align*}
  \mrm{D}\pi|_{\mrm{H}M}:\mrm{H}M\to\mrm{T}B
\end{align*}
from the \emph{horizontal bundle} $\mrm{H}M$, i.e.\ the orthogonal complement of the \emph{vertical bundle} $\mrm{V}M=\ker \mrm{D}\pi\subset\mrm{T}M$, to the tangent bundle $\mrm{T}B$ of $B$.

The O'Neill tensors \cite{oneill-1966,gray-1967} are to a submersion what the second fundamental form is to an immersion. Karcher \cite{karcher-1999} elegantly formulated them in terms of the covariant derivatives of the orthogonal projections:
\begin{align*}
  \HH,\VV&:\mrm{T}M\to\mrm{T}M
\end{align*}
onto $\mrm{H}M$ and $\mrm{V}M$. This approach clarifies the tensors' symmetries and reduces the number of cases to be considered. (Since $\HH+\VV=\mrm{id}$, and therefore $\nabla\HH=-\nabla\VV$, we can mostly avoid $\VV$.)

We will rely on Karcher's formulas so heavily that we begin by restating them, prefixing a `K' to his numbering to ease cross-referencing. In a few cases we improve on his formulas and, to ease comparison, prefix an `M' to his numbering. The rest of this paper's formulas are numbered to avoid clashing with Karcher's.

\medskip
Let $X,Y,Z$ be sections of $\mrm{T}M$; $U,V,W$ sections of $\mrm{V}M$; and $H,K,L$ sections of $\mrm{H}M$. \\
The horizontal and vertical components of the connection $\nabla$ of $M$:
\begin{align*}
  \tag{K$1_\mrm{H}$}
  \HH\cdot\nabla_XV &= -\nabla_X\HH\cdot V \\
  \tag{K$1_\mrm{V}$}
  \VV\cdot\nabla_XH &= +\nabla_X\HH\cdot H
\end{align*}
Therefore, if $\nabla^\mrm{H}$ and $\nabla^\mrm{V}$ are the induced connections on $\mrm{H}M$ and $\mrm{V}M$ then:
\begin{align*}
  \tag{K$2_\mrm{H}$}
  \nabla_XH &=\nabla^\mrm{H}_XH + \nabla_X\HH\cdot H \\
  \tag{K$2_\mrm{V}$}
  \nabla_XV &=\nabla^\mrm{V}_XV - \nabla_X\HH\cdot V
\end{align*}
Compatibility with metric:
\begin{align*}
  \tag{K$3_1$} 
  \begin{aligned}
  \g{\HH\cdot Y,Z} &= \g{Y,\HH\cdot Z} \\
  \g{\nabla_X\HH\cdot Y,Z} &= \g{Y,\nabla_X\HH\cdot Z} \\
  g\big(\nabla^2_{X,X'}\HH\cdot Y,Z\big) &= g\big(Y,\nabla^2_{X,X'}\HH\cdot Z\big)
  \end{aligned}
\end{align*}
$\nabla_Y\HH$ maps $\mrm{V}M\to\mrm{H}M$ and $\mrm{H}M\to\mrm{V}M$; in fact:
\begin{align*}
  \tag{K$3_2$}
  \nabla_Y\HH\cdot\VV &= \HH\cdot\nabla_Y\HH &
  \nabla_Y\HH\cdot\HH &= \VV\cdot\nabla_Y\HH
\end{align*}
A Codazzi-like equation and its restriction to $\mrm{H}M$ and $\mrm{V}M$:
\begin{align*}
  \tag{K$3_3$}
  \begin{aligned}
    \nabla^2_{X,Y}\HH\cdot\VV-\nabla_Y\HH\cdot\nabla_X\HH &= \nabla_X\HH\cdot\nabla_Y\HH+\HH\cdot\nabla^2_{X,Y}\HH \\
    -\HH\cdot\nabla^2_{X,Y}\HH\cdot H &= \big(\nabla_X\HH\cdot\nabla_Y\HH+\nabla_Y\HH\cdot\nabla_X\HH\big)\cdot H  \\
    +\VV\cdot\nabla^2_{X,Y}\cdot V &= \big(\nabla_X\HH\cdot\nabla_Y\HH+\nabla_Y\HH\cdot\nabla_X\HH\big)\cdot V
  \end{aligned}
  \end{align*}
Symmetry of $\nabla\HH$ on $\mathrm{V}M$:
    \begin{align*}
      \nabla_U\HH\cdot V - \nabla_V\HH\cdot U = 0 \tag{K$6_3$}
    \end{align*}
Skewsymmetry of $\nabla\HH$ on $\mathrm{H}M$:
  \begin{align*}
    \nabla_H\HH\cdot K + \nabla_K\HH\cdot H =0 \tag{Prop.~K3b}
  \end{align*}
Decomposition of the Riemann tensor:
\begin{align*}
  \R(X,Y)V &= -\overbrace{\R(X,Y)\HH\cdot V}^{\text{in $\mrm{H}M$}} \,+ \overbrace{\R^{\mrm{V}}(X,Y)V\,-\big[\nabla_X\HH,\nabla_Y\HH\big]V}^{\text{in $\mrm{V}M$}} \tag{K10, K11} \\
  \R(X,Y)H &= \phantom{-}\underbrace{\R(X,Y)\HH\cdot H}_{\text{in $\mrm{V}M$}} + \underbrace{\R^{\mrm{H}}(X,Y)H-\big[\nabla_X\HH,\nabla_Y\HH\big] H}_{\text{in $\mrm{H}M$}} \tag{K10, K12}
\end{align*}
If $X,Y$ are vertical then (K11) is the Gauss equation of the fibers.\footnote{Indeed, by (K11) and (K$3_1$) and (K$2_\mrm{V}$):
\begin{align*}
  \g{\R(X,Y)U,V} &=
  g(\R^{\mrm{V}}(X,Y)U,V) - \g{\nabla_X\HH\cdot U,\nabla_Y\HH\cdot V} + \g{\nabla_Y\HH\cdot U,\nabla_X\HH\cdot V} \\
  &=g(\R^{\mrm{V}}(X,Y)U,V) - \g{\II(X,U),\II(Y,V)} + \g{\II(Y,U),\II(X,V)}
\end{align*}
}
Beware that $R(X,Y)\HH\cdot Z$ does \emph{not} satisfy a cyclic Bianchi identity in $X,Y,Z$.

\section{Shrinking the Fibers}

Now shrink the fibers of $\pi$ by a factor of $1-\epsilon$:
\begin{align*}
  \gep{X,Y} &= g\big((1-\epsilon\VV)\cdot X, (1-\epsilon\VV)\cdot Y\big)
\end{align*}
Thus:
\begin{align*}
  \gep{X,H} &= g(X,H) &
  \gep{X,V} &= (1-\epsilon)^2\cdot g(X,V)
\end{align*}

Let $\nablep$ be the Levi-Civita connection of $(M,g_\epsilon)$ and let $\Gep$ be the difference tensor:
\begin{align*}
  \Gep(X,Y)=\nablep_XY-\nabla_XY
\end{align*}

Karcher's equation (K15) may be written:
\begin{align*}
  (\nabla_Xg_\epsilon)(Y,Z) = \epsilon(2-\epsilon)\cdot g(Y,\nabla_X\HH\cdot Z) \tag{M15}
\end{align*}

Karcher's equations (K16) extend to a single general formula:
\begin{align*}
  \Gep(X,Y) = \epsilon(2-\epsilon)\cdot\big(\nabla_{\HH X}\HH\cdot\VV Y +\nabla_{\HH Y}\HH\cdot\VV X +\nabla_{\VV X}\HH\cdot\VV Y\big) \tag{M$16_1$}
\end{align*}
The first and third terms combine easily but $\Gep$---being the difference of torsion free connections---is \emph{symmetric}, and this three-term formula showcases that symmetry---the third term being symmetric according to (K$6_3$). Observe that \emph{$\Gep$ is always horizontal}. Some useful special cases:
\begin{align*}
  \tag{M$16_2$}
  \begin{aligned}
    \Gep(X,V) &= \epsilon(2-\epsilon)\cdot\nabla_X\HH\cdot V \\
    \Gep(X,H) &= \epsilon(2-\epsilon)\cdot\nabla_H\HH\cdot\VV X
  \end{aligned}
\end{align*}

\newcommand{\npiGe}{\ensuremath{18}}
Comparing (K9) and (K16) leads to:
\begin{align*}
  \epsilon(2-\epsilon)\cdot(\nabla^2\pi)(X,Y) &= \mrm{D}\pi\cdot\Gep(X,Y) \tag{\npiGe}
\end{align*}

\newcommand{\neHH}{\ensuremath{19}}

Some additional formulas worth recording:
\begin{align*}
  \tag{\neHH}
  \begin{aligned}
  \nablep_X\HH\cdot H &= \nabla_X\HH\cdot H \\
  \nablep_X\HH\cdot V &= (1-\epsilon)^2\cdot\nabla_X\HH\cdot V \\
  \nablep_X\HH\cdot \nablep_Y\HH &= (1-\epsilon)^2\cdot\nabla_X\HH\cdot \nabla_Y\HH \\
  \big[\nabla^\epsilon_X\HH,\nabla^\epsilon_Y\HH\big] &= (1-\epsilon)^2\cdot \big[\nabla_X\HH,\nabla_Y\HH\big]
  \end{aligned}
\end{align*}

\section{Effect on the Riemann Tensor}

Let $V_1,\dots,V_k,H_{k+1},\dots,H_n$ be a positively-oriented orthonormal moving frame on $M$ consisting of vertical followed by horizontal vectors. Let $V^\epsilon_i=\tfrac1{1-\epsilon}V_i$. Then $V^\epsilon_1,\dots,V^\epsilon_k,H_{k+1},\dots,H_n$ is a positively-oriented orthonormal moving frame on $(M,g_\epsilon)$.

\newcommand{\Rdefo}[1]{\ensuremath{20_#1}}
\begin{theorem}
  \label{thm:defo}
  \begin{align*}
    \tag{\Rdefo1} 
      g_\epsilon\big(\Rep(X,Y)V^\epsilon_i,V^\epsilon_j\big)
    &= g\big(\R^{\mrm{V}}(X,Y)V_i,V_j\big)-(1-\epsilon)^2\cdot g\big([\nabla_X\HH,\nabla_Y\HH] V_i,V_j\big) \\
    \tag{\Rdefo2} 
    g_\epsilon\big(\Rep(X,Y)H_i,V^\epsilon_j\big) &=
        (1-\epsilon)\cdot \Big[ g\big(\R(X,Y)\HH\cdot H_i,V_j\big) \\
        &\hspace{6em} + \epsilon(2-\epsilon)\cdot g\big(\nabla_X\HH\cdot\nabla_{H_i}\HH\cdot Y - \nabla_Y\HH\cdot\nabla_{H_i}\HH\cdot X,V_j\big) \Big] \\
    \tag{\Rdefo3}
    g_\epsilon\big(\Rep(X,Y)V^\epsilon_i,H_j\big) &= (\epsilon-1)\cdot \Big[ g\big(\R(X,Y)\HH\cdot V_j,H_i\big) \\&\hspace{6em}+ \epsilon(2-\epsilon)\cdot g\big(\nabla_{\nabla_Y\HH\cdot V_j}\HH\cdot X - \nabla_{\nabla_X\HH\cdot V_j}\HH\cdot Y,H_i\big)\Big] \\
  \tag{\Rdefo4}
  g_\epsilon\big(\Rep(X,Y)H_i,H_j\big) &= \epsilon(2-\epsilon)\cdot g^B\big(R^B\big(\mrm{D}\pi\cdot X,\mrm{D}\pi\cdot Y\big)\big(\mrm{D}\pi\cdot H_i\big),\mrm{D}\pi\cdot H_j\big) \\ &\hspace{6em}+ (1-\epsilon)^2\cdot g\big(R(X,Y)H_i,H_j\big)
  \end{align*}
\end{theorem}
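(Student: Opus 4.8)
My plan is to route everything through the difference tensor $\Gep=\nablep-\nabla$, whose algebra is already catalogued in (M$16_1$)--(M$16_2$) and (\neHH). Since $\nablep=\nabla+\Gep$ with $\Gep$ a $C^\infty$-bilinear (indeed symmetric) tensor, expanding $\Rep(X,Y)Z=\nablep_X\nablep_YZ-\nablep_Y\nablep_XZ-\nablep_{[X,Y]}Z$ and using that $\nabla$ is torsion free (so that $\Gep(\nabla_XY,Z)-\Gep(\nabla_YX,Z)=\Gep([X,Y],Z)$ cancels) yields the master identity
\begin{align*}
  \Rep(X,Y)Z = \R(X,Y)Z + (\nabla_X\Gep)(Y,Z) - (\nabla_Y\Gep)(X,Z) + \Gep\big(X,\Gep(Y,Z)\big) - \Gep\big(Y,\Gep(X,Z)\big).
\end{align*}
I would then substitute $Z\in\{V_i,H_i\}$, project the result onto $\mrm{V}M$ or $\mrm{H}M$, and pair it against $V_j$ or $H_j$.

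The rescalings are routine but must be tracked. The splitting $\mrm{T}M=\mrm{H}M\oplus\mrm{V}M$ and the projections $\HH,\VV$ are \emph{unchanged} by the deformation, and $\gep{W,H}=\g{W,H}$ while $\gep{W,V}=(1-\epsilon)^2\g{W,V}$. Hence under $V^\epsilon_i=\tfrac1{1-\epsilon}V_i$ every $g_\epsilon$-pairing on the left of (\Rdefo1)--(\Rdefo4) becomes a $g$-pairing carrying an explicit power of $(1-\epsilon)$: namely $1$ for (\Rdefo1) and (\Rdefo4), $(1-\epsilon)$ for the mixed pairing of (\Rdefo2), and $\tfrac1{1-\epsilon}$ for the mixed pairing of (\Rdefo3). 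This last power will combine with an inner $(1-\epsilon)^2$, produced below, to give the prefactor $(\epsilon-1)$.

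Two structural facts then do most of the work. First, because \emph{$\Gep$ is always horizontal}, every composite $\Gep(X,\Gep(\cdot,\cdot))$ is horizontal, so the two quadratic terms of the master identity contribute nothing to a \emph{vertical} projection; they drop out of (\Rdefo1), (\Rdefo2), (\Rdefo3) and survive only in the $HH$ formula. Second, the first-order terms $(\nabla_X\Gep)(Y,Z)$ I would expand by Leibniz and reduce with (M$16_2$) and (K$1_\mrm{H}$)/(K$1_\mrm{V}$) to move $\nabla_\bullet\HH$ across the projections; the genuine second-derivative piece, antisymmetrized in $X,Y$, reproduces $\nabla^2_{X,Y}\HH-\nabla^2_{Y,X}\HH=\R(X,Y)\HH$. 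All prefactors then assemble through the single identity $1-\epsilon(2-\epsilon)=(1-\epsilon)^2$: in (\Rdefo1), for instance, the vertical curvature $R^{\mrm{V}}$ is inert while $\big[\nabla_X\HH,\nabla_Y\HH\big]V_i$ receives $-1$ from $\R(X,Y)V_i$ (via (K11)) and $+\epsilon(2-\epsilon)$ from the first-order terms, netting $-(1-\epsilon)^2$. This last formula also falls out at once if one instead applies (K11) directly to $g_\epsilon$, since the vertical connection $\VV\nablep_X=\VV\nabla_X$ is unchanged (again because $\Gep$ is horizontal), so $R^{\mrm{V},\epsilon}=R^{\mrm{V}}$, while $\big[\nablep_X\HH,\nablep_Y\HH\big]=(1-\epsilon)^2\big[\nabla_X\HH,\nabla_Y\HH\big]$ by (\neHH). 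The mixed formulas (\Rdefo2) and (\Rdefo3) are the same computation for the opposite projection, and are interchanged by the antisymmetry $\gep{\Rep(X,Y)A,B}=-\gep{\Rep(X,Y)B,A}$ together with the $g$-symmetry of $\R(X,Y)\HH$ furnished by (K$3_1$).

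The hard part will be the $HH$ formula (\Rdefo4), which alone retains the quadratic terms $\Gep\big(X,\Gep(Y,H_i)\big)-\Gep\big(Y,\Gep(X,H_i)\big)$, of order $\big(\epsilon(2-\epsilon)\big)^2$. Pairing with $H_j$ kills the curvature-of-$\HH$ contribution, since $\R(X,Y)\HH\cdot H_i$ is vertical by (K12); what remains from the corrections is an expression purely quadratic in $\nabla\HH$, whose pieces of orders $\epsilon(2-\epsilon)$ and $\big(\epsilon(2-\epsilon)\big)^2$ must be consolidated — again through $1-\epsilon(2-\epsilon)=(1-\epsilon)^2$ — into a single $\nabla\HH$-quadratic of coefficient $\epsilon(2-\epsilon)$. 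The genuine obstacle is then to recognize this $\nabla\HH$-quadratic as the base contribution: the base can enter only through (\npiGe), $\epsilon(2-\epsilon)\,(\nabla^2\pi)=\mrm{D}\pi\cdot\Gep$, which trades $\Gep$ for the second fundamental form $\nabla^2\pi$ of the submersion, whereupon the submersion Gauss equation identifies the quadratic with $g^B\big(R^B(\mrm{D}\pi\,X,\mrm{D}\pi\,Y)\mrm{D}\pi\,H_i,\mrm{D}\pi\,H_j\big)-\g{\R(X,Y)H_i,H_j}$. Substituting back and using $(1-\epsilon)^2+\epsilon(2-\epsilon)=1$ produces exactly (\Rdefo4); confirming that the surviving coefficient is precisely $\epsilon(2-\epsilon)$ — the same factor already visible in (M$16_1$) and (\npiGe) — is where the computation has to be carried out with care.
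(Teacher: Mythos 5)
Your ``master identity'' is literally the paper's equation (24), and most of your architecture coincides with the paper's own proof: $(20_1)$ via (K11), the unchanged vertical connection, and (19); the mixed formulas via an expansion equivalent to the paper's Proposition $(21_1)$--$(21_2)$ (the paper compares $(\nablep)^2_{X,Y}\HH$ with $\nabla^2_{X,Y}\HH$, which is the same bookkeeping as your $\nabla\Gep$ expansion); and $(20_4)$ via (18) together with a Gauss-type commutation identity, which is exactly the paper's route through its equations (23) and (25) --- with the caveat that since $X,Y$ need not be horizontal, the identity you need is the general commutation rule for $\nabla^3\pi$ (the paper's (25)), not O'Neill's classical horizontal Gauss equation.

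But one structural claim is genuinely wrong: the quadratic terms $\Gep(X,\Gep(Y,Z))-\Gep(Y,\Gep(X,Z))$ do \emph{not} drop out of $(20_3)$. Horizontality of $\Gep$ kills them under a \emph{vertical} projection, which is what happens in $(20_1)$ and $(20_2)$ (pairing against $V_j$); but $(20_3)$ pairs against the horizontal vector $H_j$, so it is the \emph{horizontal} component of $\Rep(X,Y)V_i$ that enters, and horizontal terms survive. Indeed by (M$16_2$), $\Gep(Y,V_i)=\epsilon(2-\epsilon)\,\nabla_Y\HH\cdot V_i$ is horizontal, so applying (M$16_2$) once more gives
\begin{align*}
  \Gep\big(X,\Gep(Y,V_i)\big) = \big(\epsilon(2-\epsilon)\big)^2\,\nabla_{\nabla_Y\HH\cdot V_i}\HH\cdot\VV X \;\neq\; 0.
\end{align*}
These terms are essential: horizontally, the $\nabla\Gep$ terms contribute $\epsilon(2-\epsilon)\big(\R(X,Y)\HH\cdot V_i - Q\big)$, where $Q=\nabla_{\nabla_Y\HH\cdot V_i}\HH\cdot\VV X-\nabla_{\nabla_X\HH\cdot V_i}\HH\cdot\VV Y$, and only after adding the quadratic contribution $+\big(\epsilon(2-\epsilon)\big)^2 Q$ does the coefficient of $Q$ become $-\epsilon(2-\epsilon)\big(1-\epsilon(2-\epsilon)\big)=-\epsilon(2-\epsilon)(1-\epsilon)^2$, which after the $\tfrac1{1-\epsilon}$ rescaling of $V^\epsilon_i$ yields the prefactor $(\epsilon-1)\cdot\epsilon(2-\epsilon)$ of $(20_3)$. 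Dropping them would instead leave $-\epsilon(2-\epsilon)/(1-\epsilon)$, which blows up as $\epsilon\to1$ and would destroy the $O(1-\epsilon)$ off-diagonal estimate needed for the Corollary. Your proposal is rescued by the other route you mention: derive $(20_3)$ from $(20_2)$ via the antisymmetry $\gep{\Rep(X,Y)A,B}=-\gep{\Rep(X,Y)B,A}$ together with the $g$-symmetry of $\R(X,Y)\HH$ from (K$3_1$) --- this is precisely the content of the paper's Remark. Take that as the actual proof of $(20_3)$, and delete the claim that the quadratic terms survive only in the $HH$ formula.
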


\begin{remark*}
  (\Rdefo2) and (\Rdefo3) are opposite as expected since:
  \begin{align*}
    \g{\nabla_{\nabla_Y\HH\cdot V}\HH\cdot X,H} &\overset{(\text{K$3_1$)}}{=} \g{X,\nabla_{\nabla_Y\HH\cdot V}\HH\cdot H} \\ &\overset{\text{Prop.}}{\underset{\text{K3b}}{=}} \g{X,-\nabla_H\HH\cdot \nabla_Y\HH\cdot V} \overset{\text{(K$3_1$)}}{=} -\g{\nabla_Y\HH\cdot\nabla_H\HH\cdot X,V}
  \end{align*}
\end{remark*}

\begin{corollary}
  Relative to $V^\epsilon_1,\dots,V^\epsilon_k,H_{k+1},\dots,H_n$, the curvature form of $(M,g_\epsilon)$ looks like:
  \begin{align*}
    \Omega^\epsilon
    =
  \left[
    \begin{array}{c|c}
      \Omega^\mathrm{V} + O(1-\epsilon)^2 & O(1-\epsilon) \\ \hline
      O(1-\epsilon) & \pi^*(\Omega^B) + O(1-\epsilon)^2
    \end{array}
  \right]
  \end{align*}
  where $\pi^*(\Omega^B)$ is the pullback of the curvature form of $B$. So as $\epsilon\to1$:
  \begin{align*}
    \Pf(\Omega^\epsilon) &\to \Pf(\Omega^\mathrm{V}) \wedge \pi^*\big(\Pf(\Omega^B)\big)
  \end{align*}
  If $\pi$ has compact fiber $F$ then integrating along the fibers gives:
  \begin{align*}
    \pi_*\big(\Pf(\Omega^\epsilon)\big) &\to \chi(F)\cdot\Pf(\Omega^B)
  \end{align*}
\end{corollary}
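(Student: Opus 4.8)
The plan is to read the block form straight off Theorem~\ref{thm:defo} and then invoke three standard facts: continuity of the Pfaffian, its multiplicativity on block-diagonal matrices, and fiberwise Gauss--Bonnet. First I would note that, relative to the frame $V^\epsilon_1,\dots,V^\epsilon_k,H_{k+1},\dots,H_n$, the entries of the curvature matrix $\Omega^\epsilon$ are precisely the four quantities computed in the Theorem. Equation $(20_1)$ displays the vertical block as $g(\R^{\mrm V}(\cdot,\cdot)\cdot,\cdot)$ minus a term with an explicit factor $(1-\epsilon)^2$, i.e.\ $\Omega^{\mrm V}+O(1-\epsilon)^2$; equations $(20_2)$ and $(20_3)$ each carry an explicit factor $(1-\epsilon)$, so the two off-diagonal blocks are $O(1-\epsilon)$; and since $\epsilon(2-\epsilon)=1-(1-\epsilon)^2$, equation $(20_4)$ rewrites the horizontal block as $\pi^*\Omega^B+(1-\epsilon)^2\big(g(\R(\cdot,\cdot)\cdot,\cdot)-\pi^*\Omega^B\big)=\pi^*\Omega^B+O(1-\epsilon)^2$, using that $\mrm D\pi$ is a horizontal isometry so that $g^B\big(R^B(\mrm D\pi\,\cdot,\dots),\dots\big)$ is exactly $\pi^*\Omega^B$. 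This gives the displayed matrix and shows that $\Omega^\epsilon$ converges, as $\epsilon\to1$, to the block-diagonal matrix with diagonal blocks $\Omega^{\mrm V}$ and $\pi^*\Omega^B$.

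Next I would push this through the Pfaffian. Since $\Pf$ is a polynomial in the matrix entries, the products being wedges of the even-degree $2$-form entries (so no sign ambiguity arises), it is continuous and commutes with the $\epsilon\to1$ limit; and for a block-diagonal skew matrix it factors as the wedge of the two block Pfaffians. Hence $\Pf(\Omega^\epsilon)\to\Pf(\Omega^{\mrm V})\wedge\Pf(\pi^*\Omega^B)=\Pf(\Omega^{\mrm V})\wedge\pi^*\Pf(\Omega^B)$, the last equality because pullback commutes with $\Pf$. If $\dim F$ or $\dim B$ is odd the corresponding block Pfaffian vanishes, as does $\chi(F)$, so both sides of the final claim are zero; I would handle the even case and dispatch the odd case with this remark.

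For the pushforward, $\Pf(\Omega^\epsilon)$ depends polynomially on $\epsilon$, so over each compact fiber integration and the $\epsilon\to1$ limit commute, giving $\pi_*\Pf(\Omega^\epsilon)\to\pi_*\big(\Pf(\Omega^{\mrm V})\wedge\pi^*\Pf(\Omega^B)\big)$; the projection formula then rewrites the right-hand side as $\big(\pi_*\Pf(\Omega^{\mrm V})\big)\cdot\Pf(\Omega^B)$. The one step with genuine geometric content — the hard part — is the identification $\pi_*\Pf(\Omega^{\mrm V})=\chi(F)$. For this I would verify that $\nabla^{\mrm V}$ restricts on each fiber to its intrinsic Levi-Civita connection: it is metric, and it is torsion-free because $[U,V]$ is vertical for vertical $U,V$, so $\VV[U,V]=[U,V]$ and the torsion $\VV(\nabla_UV-\nabla_VU)-[U,V]$ vanishes. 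Consequently $\R^{\mrm V}$ restricts to the intrinsic Riemann tensor of $(F,g|_F)$ and $\Pf(\Omega^{\mrm V})$ restricts to the Gauss--Bonnet integrand of the fiber; Gauss--Bonnet--Chern then gives $\int_F\Pf(\Omega^{\mrm V})=\chi(F)$, constant in the base point, and the Corollary follows.
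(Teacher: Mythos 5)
Your proposal is correct and takes essentially the same route the paper intends for this corollary: read the block structure directly off Theorem~\ref{thm:defo} (via $\epsilon(2-\epsilon)=1-(1-\epsilon)^2$), then apply continuity and block-multiplicativity of $\Pf$ together with fiberwise Gauss--Bonnet and the projection formula. Your verification that $\nabla^{\mathrm{V}}$ restricts on each fiber to its intrinsic Levi-Civita connection---so that $\R^{\mathrm{V}}$ gives the fibers' intrinsic curvature and $\pi_*\Pf(\Omega^{\mathrm{V}})=\chi(F)$---is precisely the point the paper disposes of in its footnote identifying (K11), for vertical arguments, with the Gauss equation of the fibers.
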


\section{Proof of Theorem~\ref{thm:defo}}

$\bullet$ {(\Rdefo1)} is easiest:
\begin{align*}
  \VV\cdot\Rep(X,Y)V &\overset{\text{(K11)}}{=} \R^\mrm{V}(X,Y)V - [\nablep_X\HH,\nablep_Y\HH]V \\
  &\overset{(\neHH)}= \R^\mrm{V}(X,Y)V - (1-\epsilon)^2\cdot[\nabla_X\HH,\nabla_Y\HH]V
\end{align*}

\smallskip
\newcommand{\ReHH}[1]{\ensuremath{21_#1}}
$\bullet$ {(\Rdefo2)} and {(\Rdefo3)} follow from (K10) and:
\begin{proposition*}
  \begin{align*}
    \Rep(X,Y)\HH\cdot H &= \R(X,Y)\HH\cdot H + \epsilon(2-\epsilon)\cdot\big( \nabla_X\HH\cdot\nabla_H\HH\cdot\VV Y - \nabla_Y\HH\cdot\nabla_H\HH\cdot\VV X \big) \tag{\ReHH1} \\
    \Rep(X,Y)\HH\cdot V &= (1-\epsilon)^2\cdot\Big( \R(X,Y)\HH\cdot V + \epsilon(2-\epsilon)\cdot\big(\nabla_{\nabla_Y\HH\cdot V}\HH\cdot \VV X - \nabla_{\nabla_X\HH\cdot V}\HH\cdot \VV Y \big)\Big) \tag{\ReHH2}
  \end{align*}
\end{proposition*}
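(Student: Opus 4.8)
The plan is to route everything through the difference tensor $\Gep$ and the standard formula for how curvature responds to a change of connection. Since $\nablep=\nabla+\Gep$ with $\Gep$ symmetric, expanding $\nablep_X\nablep_YZ-\nablep_Y\nablep_XZ-\nablep_{[X,Y]}Z$ and using that $\nabla$ is torsion-free gives, for every vector field $Z$,
\begin{align*}
  \Rep(X,Y)Z-\R(X,Y)Z &= (\nabla_X\Gep)(Y,Z)-(\nabla_Y\Gep)(X,Z) \\
  &\quad +\Gep(X,\Gep(Y,Z))-\Gep(Y,\Gep(X,Z)),
\end{align*}
which I abbreviate $D^\epsilon(X,Y)Z$. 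The curvature operator acts on the $(1,1)$-tensor $\HH$ as a derivation, so the Ricci identity gives $\Rep(X,Y)\HH=[\Rep(X,Y),\HH]$ and likewise for $\R$; subtracting, $\Rep(X,Y)\HH-\R(X,Y)\HH=[D^\epsilon(X,Y),\HH]$. Evaluating this commutator on a horizontal $H$ (where $\HH H=H$) and on a vertical $V$ (where $\HH V=0$) collapses the whole problem to
\begin{align*}
  \Rep(X,Y)\HH\cdot H &= \R(X,Y)\HH\cdot H + \VV\,D^\epsilon(X,Y)H, \\
  \Rep(X,Y)\HH\cdot V &= \R(X,Y)\HH\cdot V - \HH\,D^\epsilon(X,Y)V.
\end{align*}
Because $\Gep$ is always horizontal, the quadratic terms of $D^\epsilon$ land in $\mrm{H}M$, which is what makes the two projections behave so differently.

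For (\ReHH1) I would take the vertical part of $D^\epsilon(X,Y)H$. The quadratic terms are horizontal and die, and in the first-order terms only $\VV\nabla_X(\Gep(Y,H))$ survives, the other two pieces being horizontal values of $\Gep$. Feeding $\Gep(Y,H)=\epsilon(2-\epsilon)\cdot\nabla_H\HH\cdot\VV Y$ from (M$16_2$) into (K$1_\mrm{V}$) turns this into $\epsilon(2-\epsilon)\cdot\nabla_X\HH\cdot\nabla_H\HH\cdot\VV Y$, and antisymmetrizing in $X,Y$ reproduces (\ReHH1) on the nose---no curvature term and no factor of $(1-\epsilon)^2$.

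For (\ReHH2) I would take the horizontal part of $D^\epsilon(X,Y)V$, where both orders contribute. Expanding $(\nabla_X\Gep)(Y,V)$ with $\Gep(Y,V)=\epsilon(2-\epsilon)\cdot\nabla_Y\HH\cdot V$ and using Leibniz, the piece $\nabla_X(\nabla_Y\HH\cdot V)-\nabla_{\nabla_XY}\HH\cdot V$, once antisymmetrized, assembles into the second-derivative commutator $(\nabla^2_{X,Y}\HH-\nabla^2_{Y,X}\HH)\cdot V=\R(X,Y)\HH\cdot V$; this is the source of the curvature term. Rewriting $\HH\cdot\nabla_XV=-\nabla_X\HH\cdot V$ by (K$1_\mrm{H}$) shows the leftover first-order pieces equal $\epsilon(2-\epsilon)$ times the bracket in (\ReHH2), while the quadratic terms give $[\epsilon(2-\epsilon)]^2$ times that bracket with the opposite sign. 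Collecting,
\begin{align*}
  \HH\,D^\epsilon(X,Y)V &= \epsilon(2-\epsilon)\cdot\R(X,Y)\HH\cdot V \\
  &\quad + \epsilon(2-\epsilon)(1-\epsilon)^2\cdot\big(\nabla_{\nabla_X\HH\cdot V}\HH\cdot\VV Y-\nabla_{\nabla_Y\HH\cdot V}\HH\cdot\VV X\big),
\end{align*}
and substituting into $\Rep(X,Y)\HH\cdot V=\R(X,Y)\HH\cdot V-\HH\,D^\epsilon(X,Y)V$ yields (\ReHH2).

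The main obstacle---and really the only content---is this coefficient bookkeeping. The curvature term survives with coefficient $1-\epsilon(2-\epsilon)=(1-\epsilon)^2$, and the bracket with coefficient $\epsilon(2-\epsilon)-[\epsilon(2-\epsilon)]^2=\epsilon(2-\epsilon)(1-\epsilon)^2$, so the common factor $(1-\epsilon)^2$ cleanly pulls out into the stated form. Throughout I would lean on (K$1_\mrm{H}$), (K$1_\mrm{V}$), (K$2_\mrm{V}$) and (K$3_2$)---that $\nabla\HH$ interchanges $\mrm{H}M$ and $\mrm{V}M$---to decide at each step which projection survives, and on the horizontality of $\Gep$ to discard the quadratic terms wherever I am projecting onto $\mrm{V}M$.
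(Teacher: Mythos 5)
Your proof is correct, but it takes a genuinely different route from the paper's. The paper proves the proposition by computing the deformed Hessians $\VV\cdot(\nablep)^2_{X,Y}\HH\cdot H$ and $\HH\cdot(\nablep)^2_{X,Y}\HH\cdot V$ directly in terms of the undeformed ones---using (19) to convert $\nablep\HH$ into $\nabla\HH$, antisymmetrizing so that the symmetry of $\Gep$ cancels the $\nabla_{\Gep(X,Y)}\HH$ terms, and then invoking (K$3_3$) to see that the complementary projections $\HH\cdot\Rep(X,Y)\HH\cdot H$ and $\VV\cdot\Rep(X,Y)\HH\cdot V$ vanish for both metrics, so that the projected identities are in fact the full ones. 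You instead route everything through the change-of-connection curvature formula---which is precisely the paper's equation (24), introduced there only later, for the proof of (22)---together with the Ricci identity in commutator form, $\Rep(X,Y)\HH-\R(X,Y)\HH=[D^\epsilon(X,Y),\HH]$. This buys real simplifications: evaluating the commutator on $H$ and on $V$ yields the exact, unprojected identities $\Rep(X,Y)\HH\cdot H-\R(X,Y)\HH\cdot H=\VV\,D^\epsilon(X,Y)H$ and $\Rep(X,Y)\HH\cdot V-\R(X,Y)\HH\cdot V=-\HH\,D^\epsilon(X,Y)V$, so you never need (19) or (K$3_3$), and the single lemma (24) then serves both propositions of the paper; the commutator also explains structurally why the opposite projection appears in each case. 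What the paper's route buys in exchange is never having to expand $\nabla_X\Gep$. One caution about your write-up: in the computation for (21$_2$), your prose assigns the leftover first-order pieces the value $+\epsilon(2-\epsilon)$ times the bracket of (21$_2$) and the quadratic pieces the opposite sign; that is true of their contributions to $\Rep(X,Y)\HH\cdot V$ (i.e.\ after the minus sign in $-\HH\,D^\epsilon(X,Y)V$), whereas as contributions to $\HH\,D^\epsilon(X,Y)V$ itself the signs are reversed. Your displayed formula for $\HH\,D^\epsilon(X,Y)V$ is nevertheless the correct one---it agrees with the direct expansion using (M$16_2$), (K$1_\mrm{H}$), (K$1_\mrm{V}$) and (K$3_2$), the coefficients combining as $\epsilon(2-\epsilon)\big(1-\epsilon(2-\epsilon)\big)=\epsilon(2-\epsilon)(1-\epsilon)^2$---and substituting it does give (21$_2$), so this is a wording ambiguity, not a gap.
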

\begin{proof}%

  Since:
  \begin{align*}
    \nabla^2_{X,Y}\HH\cdot H &= \nabla_X(\nabla_Y\HH\cdot H) - \nabla_{\nabla_XY}\HH\cdot H - \nabla_Y\HH\cdot\nabla_XH
\intertext{it follows by (K$3_2$), (\neHH), $\VV\cdot\Gep=0$ and (M$16_2$) that:}
    \VV\cdot(\nablep)^2_{X,Y}\HH\cdot H &= \VV\cdot\Big( \nablep_X(\nablep_Y\HH\cdot H)-\nablep_{\nablep_XY}\HH\cdot H-\nablep_Y\HH\cdot(\HH\cdot\nablep_XH)\Big) \\
    &= \VV\cdot\Big( \nabla_X(\nabla_Y\HH\cdot H)-\nabla_{\nabla_XY+\Gep(X,Y)}\HH\cdot H-\nabla_Y\HH\cdot\HH\cdot(\nabla_XH+\Gep(X,H))\Big) \\
    &= \VV\cdot\nabla^2_{X,Y}\HH\cdot H - \nabla_{\Gep(X,Y)}\HH\cdot H - \epsilon(2-\epsilon)\cdot\nabla_Y\HH\cdot\nabla_H\HH\cdot\VV X
  \intertext{Since $\Gep$ is symmetric, the second term drops out of:}
    \VV\cdot\Rep(X,Y)\HH\cdot H &= \VV\cdot\R(X,Y)\HH\cdot H + \epsilon(2-\epsilon)\cdot\big(\nabla_X\HH\cdot\nabla_H\HH\cdot\VV Y - \nabla_Y\HH\cdot\nabla_H\HH\cdot\VV X \big)
  \end{align*}
  (\ReHH1) follows since the symmetry of (K$3_3$) restricted to $\mrm{H}M$ implies that:
  \begin{align*}
    \HH\cdot\Rep(X,Y)\HH\cdot H=0=\HH\cdot\R(X,Y)\HH\cdot H
  \end{align*}

  Similarly, since:
  \begin{align*}
    \nabla^2_{X,Y}\HH\cdot V &= \nabla_X(\nabla_Y\HH\cdot V) - \nabla_{\nabla_XY}\HH\cdot V - \nabla_Y\HH\cdot\nabla_XV
\intertext{it follows by (K$3_2$), (\neHH), $\VV\cdot\Gep=0$ and (M$16_2$) that:}
    \HH\cdot(\nablep)^2_{X,Y}\HH\cdot V &= \HH\cdot\Big( \nablep_X(\nablep_Y\HH\cdot V)-\nablep_{\nablep_XY}\HH\cdot V-\nablep_Y\HH\cdot\nablep_XV\Big) \\
    &= (1-\epsilon)^2\cdot\HH\cdot\Big( \nablep_X(\nabla_Y\HH\cdot V)-\nabla_{\nablep_XY}\HH\cdot V - \nabla_Y\HH\cdot(\VV\cdot\nablep_XV)\Big) \\
    &= (1-\epsilon)^2\cdot\HH\cdot\Big( \nabla_X(\nabla_Y\HH\cdot V)+\Gep(X,\nabla_Y\HH\cdot V)-\nabla_{\nabla_XY+\Gep(X,Y)}\HH\cdot V - \nabla_Y\HH\cdot(\VV\cdot\nabla_XV)\Big) \\
    &= (1-\epsilon)^2\cdot \Big( \HH\cdot\nabla^2_{X,Y}\HH\cdot V + \epsilon(2-\epsilon)\cdot\nabla_{\nabla_Y\HH\cdot V}\HH\cdot\VV X - \nabla_{\Gep(X,Y)}\HH\cdot V \Big)
    \intertext{Since $\Gep$ is symmetric, the third term drops out of:}
    \HH\cdot\Rep(X,Y)\HH\cdot V &= (1-\epsilon)^2\cdot \Big( \HH\cdot\R(X,Y)\HH\cdot V + \epsilon(2-\epsilon)\cdot\big( \nabla_{\nabla_Y\HH\cdot V}\HH\cdot\VV X - \nabla_{\nabla_X\HH\cdot V}\HH\cdot\VV Y\big)\Big)
  \end{align*}
  (\ReHH2) follows since the symmetry of (K$3_3$) restricted to $\mrm{V}M$ implies that:
  \begin{align*}
    \VV\cdot\Rep(X,Y)\HH\cdot V&=0=\VV\cdot\R(X,Y)\HH\cdot V\qedhere
  \end{align*}
\end{proof}

\smallskip
\newcommand{\DpiRe}{\ensuremath{22}}
$\bullet$ {(\Rdefo4)} follows from:
\begin{proposition*}
\begin{align*}
\mrm{D}\pi\cdot \Rep(X,Y)Z = \epsilon(2-\epsilon)\cdot \R^B(\mrm{D}\pi\cdot X,\mrm{D}\pi\cdot Y)(\mrm{D}\pi\cdot Z) + (1-\epsilon)^2\cdot \mrm{D}\pi\cdot\R(X,Y)Z \tag{\DpiRe}
\end{align*}
\end{proposition*}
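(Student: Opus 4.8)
The cleanest route is to read (\DpiRe) as a statement about the map $\pi$ itself, exploiting that shrinking the fibers leaves the horizontal metric—hence $g_B$ and its curvature $\R^B$—untouched, so $\pi:(M,g_\epsilon)\to(B,g_B)$ is still a Riemannian submersion onto the \emph{same} base. The plan is to invoke the Ricci identity for the Hessian of this map. Writing $\nabla^2\pi$ for the map-Hessian of (18) and expanding $\Rep(X,Y)Z=\nablep_X\nablep_YZ-\nablep_Y\nablep_XZ-\nablep_{[X,Y]}Z$ using $\mrm{D}\pi\cdot\nablep_XW=\nabla^B_X(\mrm{D}\pi\cdot W)-(\nablep)^2\pi(X,W)$, all the $\nablep W$ terms telescope and one is left with
\[
\mrm{D}\pi\cdot\R(X,Y)Z=\R^B(\mrm{D}\pi\cdot X,\mrm{D}\pi\cdot Y)(\mrm{D}\pi\cdot Z)-\big[(\nabla_X\nabla^2\pi)(Y,Z)-(\nabla_Y\nabla^2\pi)(X,Z)\big],
\]
together with the same identity for $(\nablep,\Rep,(\nablep)^2\pi)$; here $(\nabla_X\nabla^2\pi)(Y,Z)=\nabla^B_X\big(\nabla^2\pi(Y,Z)\big)-\nabla^2\pi(\nabla_XY,Z)-\nabla^2\pi(Y,\nabla_XZ)$. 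Recording these two identities is the first step.

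Next come two scalings. Subtracting the defining formulas for the two Hessians and invoking (18) gives $(\nablep)^2\pi(X,Y)-\nabla^2\pi(X,Y)=-\mrm{D}\pi\cdot\Gep(X,Y)=-\epsilon(2-\epsilon)\,\nabla^2\pi(X,Y)$, hence
\[
(\nablep)^2\pi=(1-\epsilon)^2\,\nabla^2\pi,\qquad\text{since } 1-\epsilon(2-\epsilon)=(1-\epsilon)^2.
\]
Then I would compare the antisymmetrized second derivatives. Because $\nablep$ differs from $\nabla$ only through $\Gep$ acting on the tensor slots, and because $\Gep$ is symmetric, the correction $\Gep(X,Y)$ in the \emph{first} two slots is symmetric in $X,Y$ and drops out of the antisymmetrization—exactly the ``$\Gep$ symmetric $\Rightarrow$ term drops out'' mechanism already used for (\ReHH1) and (\ReHH2). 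Granting that the remaining corrections also disappear, the antisymmetrized derivative of $(\nablep)^2\pi$ is $(1-\epsilon)^2$ times that of $\nabla^2\pi$; subtracting $(1-\epsilon)^2$ times the $\nabla$-identity from the $\nablep$-identity then lets the $\R^B$ terms combine with coefficient $1-(1-\epsilon)^2=\epsilon(2-\epsilon)$ and the $\mrm{D}\pi\cdot\R(X,Y)Z$ term with coefficient $(1-\epsilon)^2$, which is precisely (\DpiRe).

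The main obstacle is the cancellation just glossed over. The corrections in which $\Gep$ differentiates the \emph{third} slot $Z$ are antisymmetric in $X,Y$ and, after using (18) and $\VV\cdot\Gep=0$, leave exactly the O'Neill-quadratic horizontal remainder $\mrm{D}\pi\cdot\big(\nabla_{\Gep(X,Z)}\HH\cdot\VV Y-\nabla_{\Gep(Y,Z)}\HH\cdot\VV X\big)$, so both routes reduce (\DpiRe) to the vanishing of this expression. I expect to dispatch it with the same O'Neill symmetries used in the Remark after the Theorem—(K$6_3$) and Prop.~K3b, combined with (K$3_1$)—which is where the genuine work lies; as a cross-check I would also run the computation through the difference-tensor curvature formula $\Rep=\R+(\nabla\Gep)^{\mrm{alt}}+(\Gep\!\cdot\!\Gep)^{\mrm{alt}}$, where applying $\mrm{D}\pi$ and (18) reproduces the \emph{same} remainder (the $\Gep$-quadratic piece merely rescaling its coefficient), confirming that this symmetry identity, and not the bookkeeping, is the crux.
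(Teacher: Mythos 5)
Your preliminary steps are all correct --- the Ricci identity for the map $\pi$, the scaling law $(\nablep)^2\pi=(1-\epsilon)^2\,\nabla^2\pi$, and the reduction of (22) to the vanishing of the remainder $\mrm{D}\pi\cdot\big(\nabla_{\Gep(X,Z)}\HH\cdot\VV Y-\nabla_{\Gep(Y,Z)}\HH\cdot\VV X\big)$ --- but the step you postpone as ``where the genuine work lies'' is precisely the step that cannot be carried out: that remainder does not vanish, and no combination of (K$6_3$), Prop.~K3b and (K$3_1$) will kill it. Test it on the Hopf fibration $\pi:S^3\to S^2$: take left-invariant fields with $e_1$ vertical, $e_2,e_3$ horizontal, $[e_1,e_2]=2e_3$, $[e_2,e_3]=2e_1$, $[e_3,e_1]=2e_2$, orthonormal for the round metric, so $\nabla_{e_i}e_j=\tfrac12[e_i,e_j]$. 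With $X=Z=e_1$, $Y=e_2$, (M$16_2$) gives $\Gep(e_1,e_1)=0$ and $\Gep(e_2,e_1)=\epsilon(2-\epsilon)\,e_3$, while (K$1_\mrm{H}$) gives $\nabla_{e_3}\HH\cdot e_1=-e_2$, so your remainder equals $\epsilon(2-\epsilon)\,\mrm{D}\pi\cdot e_2\neq0$. And this is not a defect of your bookkeeping, because the Proposition itself fails here: computing the Levi-Civita connection of the Berger metric $g_\epsilon$ directly gives
\begin{align*}
  \mrm{D}\pi\cdot\Rep(e_1,e_2)e_1=-(1-\epsilon)^4\,\mrm{D}\pi\cdot e_2,
  \qquad\text{whereas (22) predicts}\qquad
  (1-\epsilon)^2\,\mrm{D}\pi\cdot\R(e_1,e_2)e_1=-(1-\epsilon)^2\,\mrm{D}\pi\cdot e_2
\end{align*}
(the $\R^B$ term dies since $\mrm{D}\pi\cdot e_1=0$); the discrepancy is exactly $(1-\epsilon)^2$ times your remainder.

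So your two routes agree with each other; what they disagree with is the paper, and your cross-check has in fact located the error in the paper's own proof. Equation (23) is wrong: differentiating (18) with the pullback connection gives
\begin{align*}
  \epsilon(2-\epsilon)\cdot(\nabla^3\pi)(X,Y,Z)
  &=\mrm{D}\pi\cdot(\nabla_X\Gep)(Y,Z)+(\nabla^2\pi)\big(X,\Gep(Y,Z)\big)
  =\mrm{D}\pi\cdot\Big[(\nabla_X\Gep)(Y,Z)+\tfrac1{\epsilon(2-\epsilon)}\,\Gep\big(X,\Gep(Y,Z)\big)\Big],
\end{align*}
the last equality by (18) again: the quadratic term carries the coefficient $1/\epsilon(2-\epsilon)$, not $1$ as in (23). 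Feeding the corrected identity into (24) and (25) yields (22) \emph{plus} the term $-(1-\epsilon)^2\big[(\nabla^2\pi)(X,\Gep(Y,Z))-(\nabla^2\pi)(Y,\Gep(X,Z))\big]$, which is your obstruction once more. Since this defect carries the prefactor $(1-\epsilon)^2$, the Corollary's limiting statement is unharmed; but the exact formula (22) is false as stated, and so is ($20_4$): for submersions with non-abelian fibers (e.g.\ the canonical variation of $SU(3)\to SU(3)/SU(2)$ with bi-invariant metric) the defect survives even with horizontal third and fourth slots, and indeed ($20_4$) with vertical $X,Y$ then contradicts the (correct) formula ($20_1$) via pair symmetry of $\Rep$, since ($20_1$)'s right-hand side depends on $\epsilon$ through $g([\nabla_{H_k}\HH,\nabla_{H_l}\HH]V_i,V_j)\neq0$ while ($20_4$)'s does not. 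In short: your proposal is not missing a provable lemma; it has correctly reduced (22) to a statement that is false, which is the sharpest possible outcome of a blind attempt.
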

(This extends the last result of Karcher's paper, which asserts this for $\epsilon=1$ and $X,Y,Z$ horizontal.)
\begin{proof}%
\newcommand{\dGe}{\ensuremath{23}}

Differentiating (\npiGe) gives:
\begin{align*}
  \epsilon(2-\epsilon)\cdot(\nabla^3\pi)(X,Y,Z) = \mrm{D}\pi \big[ (\nabla_X\Gep)(Y,Z) + \Gep(X,\Gep(Y,Z)) \big] \tag{\dGe}
\end{align*}
A straightforward calculation shows that:
\newcommand{\ReGe}{\ensuremath{24}}
\begin{align*}
\Rep(X,Y)Z = R(X,Y)Z + (\nabla_X\Gep)(Y,Z) + \Gep(X,\Gep(Y,Z)) - (\nabla_Y\Gep)(X,Z) - \Gep(Y,\Gep(X,Z)) \tag{\ReGe}
\end{align*}
Taken together:
\begin{align*}
\mrm{D}\pi\cdot \Rep(X,Y)Z = \mrm{D}\pi\cdot R(X,Y)Z + \epsilon(2-\epsilon)\cdot\big[(\nabla^3\pi)(X,Y,Z) - (\nabla^3\pi)(Y,X,Z)\big]
\end{align*}
\newcommand{\ntp}{\ensuremath{25}}
(\DpiRe) follows since:
  \begin{align*}
    (\nabla^3\pi)(X,Y,Z)-(\nabla^3\pi)(Y,X,Z)=R^B(\mrm{D}\pi\cdot X,\mrm{D}\pi\cdot Y)(\mrm{D}\pi\cdot Z) - \mrm{D}\pi\cdot R(X,Y)Z \tag{\ntp}
  \end{align*}
\end{proof}

\section*{Acknowledgements}

Thanks to Hermann Karcher for his paper and for an encouraging correspondence.

\bigskip
\begin{minipage}{0.8\textwidth}
\it ``Come! my head's free at last!'' said Alice in a tone of delight, which changed into alarm in another moment, when she found that her shoulders were nowhere to be seen: she looked down upon an immense length of neck, which seemed to rise like a stalk out of a sea of green leaves that lay far below her.

\medskip
``What \emph{can} all that green stuff be?'' said Alice, ``and where \emph{have} my shoulders got to? And oh! my poor hands! how is it I ca'n't see you?'' She was moving them about as she spoke, but no result seemed to follow, except a little rustling among the leaves. Then she tried to bring her head down to her hands, and was delighted to find that her neck would bend about easily in every direction, like a serpent. 

\medskip
\hfill \rm ---from Ch.~4 of \href{https://archive.org/stream/AlicesAdventuresUnderGround1864#page/n63/mode/2up}{\emph{Alice's Adventures Under Ground}} by Lewis Carroll (1864)
\end{minipage}
\begin{minipage}{10pt}
\includegraphics[height=60mm]{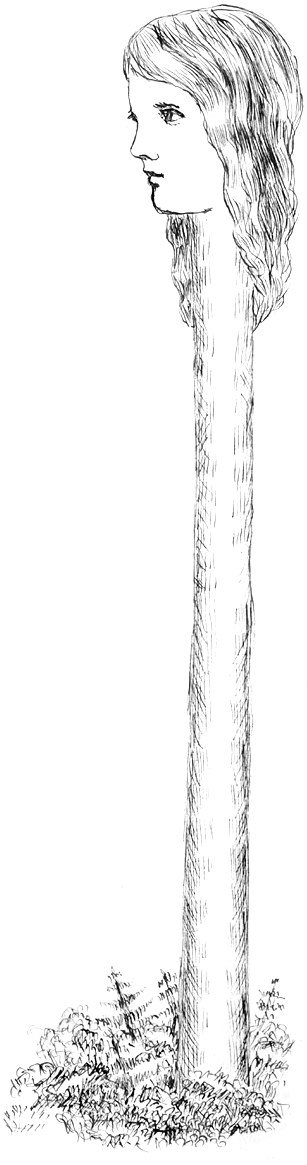}
\end{minipage}

\def\cprime{$'$}

\end{document}